\newtheorem{theorem}{Theorem}[section]
\newtheorem{lemma}[theorem]{Lemma}
\newtheorem{corollary}[theorem]{Corollary}
\theoremstyle{definition}
\newtheorem{hypothesis}[theorem]{Hypothesis}
\newtheorem{remarks}[theorem]{Remarks}
\def\Q{\mathbb{Q}}
\def\Z{\mathbb{Z}}
\def\lien{\mathrel{\mkern-4mu}}
\def\too{\relbar\lien\rightarrow}
\def\tooo{\relbar\lien\relbar\lien\too}
\def\No{{\rm N}}
\def\Cl{{\mathcal C}\hskip-2pt{\ell}}
\def\cl{c\hskip-1pt{\ell}}
\def\ov{\overline}
\def\Frac#1#2{\hbox{\footnotesize $\displaystyle \frac{#1}{#2}$}}
\def\plus{\displaystyle\mathop{\raise 2.0pt \hbox{$\bigoplus $}}\limits}
\def\prd{ \displaystyle\mathop{\raise 2.0pt \hbox{$\prod$}}\limits}
\def\sm{  \displaystyle\mathop{\raise 2.0pt \hbox{$\sum$}}\limits}
\author[Georges  Gras]{Georges  Gras }
\address{Villa la Gardette \\ chemin Ch\^ateau Gagni\`ere \\  F--38520 Le Bourg d'Oisans.}
\email{g.mn.gras@wanadoo.fr \   {\it url}:\url{http://www.researchgate.net/profile/Georges_Gras}}
\keywords{ideal class groups; class field theory; ambiguous classes; p-ramification; Riemann-Hurwitz formula; Kida's formula}
\subjclass{Primary 11R ; Secondary ,11R29 11R37, 11S15, 11S20 }
\begin{document}

\title[No Riemann-Hurwitz formula for relative $p$-class groups ]
{No Riemann-Hurwitz formula for the $p$-ranks \\ of relative class groups}
 
\begin{abstract} 
We disprove, by means of numerical examples, the existence of a Riemann-Hurwitz formula for the $p$-ranks
of relative class groups in a $p$-ramified $p$-extension $K/k$ of number fields of CM-type containing $\mu_p$.
In the cyclic case of degree~$p$, under some assumptions on the $p$-class group of $k$, we prove some properties of the 
Galois structure of the $p$-class group of $K$;
but we have found, through numerical experimentation, that some theoretical group structures do not exist in this particular
situation, and we justify this fact. Then we show, in this context, that Kida's formula on lambda invariants is valid for the $p$-ranks 
if and only if the $p$-class group of $K$ is reduced to the group of ambiguous classes, which is of course not always the case.
\end{abstract}

\maketitle

\vspace{-0.1cm}
\centerline{December  14, 2015}

\section{Generalities}\label{notations}

In 1980, Y. Kida \cite{Ki} proved an analogue of the Riemann-Hurwitz formula for
the minus part of the Iwasawa $\lambda$-invariant of the cyclotomic $\Z_p$-extension $k_\infty$
of an algebraic number field $k$ of CM-type with maximal totally real subfield $k^+$:
$$\lambda^-(K) -1= [K_\infty : k_\infty] \ (\lambda^-(k) -1) + \sm_{v \nmid p} (e_v(K^+_\infty / k^+_\infty) - 1), $$
where $K/k$ is a finite $p$-extension of CM-fields containing the group $\mu_p$ of $p$th roots of unity and where $e_v$ 
denotes the ramification index of the place $v$. When $K/k$ is $p$-ramified (i.e., unramified outside $p$) and such that
$K \cap k_\infty =k$ the formula reduces to:
$$\lambda^-(K) -1= [K : k] \, (\lambda^-(k) -1). $$

\smallskip\noindent
Many generalizations where given as in \cite{JaMa}, \cite{JaMi}, \cite{Sch}, among many others. 

\smallskip
An interesting question is to ask if such a formula can be valid for the $p$-ranks of the relative ideal class groups in 
a $p$-extension $K/k$ (e.g. $K/k$ cyclic of degree $p$).

\smallskip
In a work,  using Iwasawa theory and published in 1996 by K. Wingberg \cite{W}, such a formula is proposed 
(Theorem 2.1, Corollary 2.2) in a very general framwork and applied to the case of a $p$-ramified 
$p$-extension $K/k$ of CM-fields containing $\mu_p$. 

\smallskip
As many people remarked, we can be astonished by a result which is not really  ``arithmetical'' since many of our class groups investigations (as in \cite{Gr2}, \cite{Gr3}) show that such a ``regularity'' only happens at infinity (Iwasawa theory).
The proof of Kida's formula given by W. Sinnott \cite{Sin}, using $p$-adic $L$-functions, is probably the most appropriate to see 
the transition from one aspect to the other.

\smallskip
 Indeed, to find a relation between the $p$-ranks (for instance in a cyclic extension $K/k$ of degree~$p$) depends
 for $G:= {\rm Gal}(K/k) =: \langle \sigma \rangle$
on non obvious structures of  finite $\Z_p[G]$-modules $M$ provided with an arithmetical norm $\No_{K/k}$ and a transfert 
map $j_{K/k}$ (with $j_{K/k} \circ \No_{K/k} = 1+ \sigma + \ldots + \sigma^{p-1}$), where the filtration of the $M_i := \{h \in M,\ h^{(\sigma -1)^i} = 1\}$ plays an important non-algebraic role because all the orders $\# (M_{i+1}/M_i)$ depend on {\it arithmetical local normic computations} by means of formulas, given in \cite{Gr2}, similar to that of the case $i=0$ of ambiguous ideal classes 
(see Section \ref{structure}).

\smallskip
To be more convincing, we have given numerical computations and we shall see that it is not difficult
to conjecture that there are infinitely many counterexamples to a formula, for the relative $p$-ranks, 
such as $r_K^- - 1 = p\,(r_k^- -1)$ wich may be true in some cases.

\section{A numerical counterexample}\label{contre}

Using PARI (from \cite{P}), we give a program which can be used by the reader to compute easily for the case $p=3$
in a biquadratique field containing a primitive $3$th root of unity $j$, and such that its $3$-class group is of order 3.

\subsection{Definition and assumptions}\label{def}
Consider the following diagram:
\unitlength=0.86cm

$$\vbox{\hbox{\hspace{-1.0cm}  
\begin{picture}(11.5,7.2)
\put(4.35,0.6){\line(1,0){2.2}}
\put(5.1,4.0){\line(1,0){1.5}}
\put(4.6,7.06){\line(1,0){2.0}}
\put(4.0,1.0){\line(0,1){2.6}}
\put(7.1,1.0){\line(0,1){2.6}}
\put(4.0,4.3){\line(0,1){2.6}}
\put(7.1,4.3){\line(0,1){2.6}}
\bezier{150}(4.3,1.0)(4.75,1.45)(5.2,1.9)
\bezier{150}(5.8,2.6)(6.25,3.1)(6.7,3.6)
\put(3.8, 0.5){$\Q$}
\put(2.4, 3.8){$k^+\!=\!\Q(\sqrt{3 . d\,})$}
\put(4.5, 2.1){$k^-\!=\!\Q(\sqrt{- d})$}
\put(6.8, 0.5){$\Q(j)$}
\put(6.8, 3.84){$k\!=\!k^+(j)$}
\put(3.7, 7.0){$K^+$}
\put(6.7, 7.0){$K\!=\! k(\sqrt[3] {\alpha\,})$}
\put(7.2, 5.4){$[K:k]\!=\!3$}
\put(5.4, 7.2){$s$}
\end{picture}   }} $$

We recall, in this particular context, the hypothesis of the statement of  Corollary 2.2 (ii) of \cite{W}
and we shall suppose these conditions satisfied in all the sequel.
For any field $F$, let $\Cl_F$ be its 3-class group and let
$\Cl_F^{\pm}$ be its two usual components when $F$ is a CM-field. We denote by $H_F$ the $3$-Hilbert class field of $F$.

\begin{hypothesis}\label{hypo}
(i) $d >0$, $d$ squarefree, $d \not \equiv 0 \pmod 3$,

(ii) $p=3$ does not split in $k/\Q$ (hence $d \equiv 1 \pmod 3$),

(iii) $K^+/k^+$ is a $3$-ramified cubic cyclic extension and $K=K^+(j)$,

(iv) $K^+$ is not contained in the cyclotomic $\Z_3$-extension of $k^+$,

(v) $\Cl_{k^+} = 1 \ \& \ \Cl_{k^-} \simeq \Z/3\Z$.
\end{hypothesis}

 From (v), the ambiguous class number formula implies $\Cl_{K^+}=1$ (Lemma \ref{lemmA}).

\smallskip
Starting from a $p$-ramified cubic cyclic extension $K^+/k^+$, the associated Kummer extension $K/k$ is defined by 
$\sqrt[3]\alpha$ such that $\alpha \in k^\times \setminus k^{\times 3}$, $(\alpha) = {\mathfrak a}^3$ for an ideal 
${\mathfrak a}$ of $k$, and $\alpha^{s+1} \in  k^{\times 3}$ where $s \in {\rm Gal}(K/K^+)$ is the complex conjugation 
(usual decomposition criterion of a Kummer extension over a subfield). 

\smallskip
If the $3$-rank of the $3$-class group $\Cl_{k}^- \simeq \Cl_{k^-}$ is $r^-$, the $3$-rank of the Galois group of the maximal 
Abelian $3$-ramified $3$-extension of $k^+$ is $r^- + 1$ when $3$ does not split in $k$ (see \cite{Gr1}, 
Proposition III.4.2.2 for the general statement). From (iv), necessarily $r^- \geq 1$.
Here we suppose $\# \Cl_{k^-} =3$, hence $r^- + 1=2$  (this is also equivalent to the non-nullity of the 3-torsion subgroup 
${\mathcal T}_3$, of the above Galois group, whose order is in our context $\# {\mathcal T}_3 \sim 
{\rm log}_3(\varepsilon)/ \sqrt{3. d}$ where $\varepsilon$ is the fundamental unit of $k^+$, see \cite{Gr1}, Remark III.2.6.5 (i)). 
We shall precise the choice of the extension $K^+/k^+$ (i.e., $\alpha$) as follows:

\smallskip
Let $\alpha$ in $k^-$ be such that $(\alpha) = {\mathfrak a}^3$ (${\mathfrak a}$ must be 
a non-principal ideal since $E_{k^-}$ is trivial); this defines a canonical cubic cyclic extension $K^+$ of $k^+$ and the numbers 
$\alpha\,j$, $\alpha\,j^2$ define the two other cubic cyclic extensions $K^+_1$, $K^+_2$ of $k^+$, distinct from the first step
of the cyclotomic $\Z_3$-extension $K^+_0$ of $k^+$ defined by $j$.

\begin{lemma} \label{lemmA}
(i) We have  $\#\Cl_{K^+} = 1 \ \  \& \ \  \# \Cl_{K}^G =\# \Cl_{k}= 3$, where $G = {\rm Gal}(K/k)$. 

\smallskip
(ii) We have $\#\Cl_{H_k} = 1$.
\end{lemma}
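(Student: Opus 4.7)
The plan is to apply Chevalley's classical ambiguous class number formula to each of the three cyclic degree-$3$ extensions in sight, $K^+/k^+$, $K/k$, and $H_k/k$, invoking Nakayama's lemma (a non-trivial finite $3$-group under a $3$-group action has non-trivial invariants) to pass from ambiguous classes to the full class group.

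For the first assertion of (i), since $\Cl_{k^+}=1$ forbids non-trivial unramified $3$-extensions, the cubic $K^+/k^+$ is totally ramified at the unique prime of $k^+$ above $3$; the formula gives $\#\Cl_{K^+}^{\mathrm{Gal}(K^+/k^+)}=1\cdot 3/(3u_1)=1/u_1$, forcing $u_1=[E_{k^+}:E_{k^+}\cap N(K^{+\times})]=1$ and trivial ambiguous part, whence $\Cl_{K^+}=1$. For $\#\Cl_k=3$, I would apply Kuroda's class number formula to the biquadratic $k$: the three quadratic subfields $\Q(j)$, $k^+$, $k^-$ have $3$-class numbers $1$, $1$, $3$, and the correction factor is a power of $2$, so the $3$-part of $\#\Cl_k$ equals $1\cdot 1\cdot 3=3$. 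Applying the ambiguous class formula to $K/k$, with $\mathfrak{p}=(j-1)$ the unique prime of $k$ above $3$ (totally ramified in $K$), gives $\#\Cl_K^G=3\cdot 3/(3u_2)=3/u_2\in\{1,3\}$.

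To close (i) I would argue $u_2=1$ by pinning down $\Cl_K^G$ directly. The extension map $\Cl_k\to\Cl_K^G$ is the zero map here: from $(\alpha)=\mathfrak{a}^3$ in $k$ one gets $(\mathfrak{a}\mathcal{O}_K)^3=(\alpha^{1/3})^3$ in $K$, so unique factorization forces $\mathfrak{a}\mathcal{O}_K=(\alpha^{1/3})$ and the generator of $\Cl_k$ capitulates in $K$. Hence $\Cl_K^G$ is generated by the class $[\mathfrak{P}_K]$ of the unique prime of $K$ above $\mathfrak{p}$, which has order dividing $3$; the main obstacle is to show $[\mathfrak{P}_K]\neq 1$ --- equivalently, by Hasse's norm theorem, that every unit of $k$ is a local norm at $\mathfrak{p}$ --- which requires a local analysis of the totally ramified Kummer extension $K_\mathfrak{P}/k_\mathfrak{p}$ and hinges on the specific defining element $\alpha$. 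Finally, for (ii), the same technique applied to the unramified cyclic degree-$3$ extension $H_k/k$ gives $\#\Cl_{H_k}^{\mathrm{Gal}(H_k/k)}=3/(3u_3)=1/u_3$, forcing $u_3=1$ and trivial invariants, so $\Cl_{H_k}=1$ by Nakayama.
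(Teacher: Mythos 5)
Parts of your proposal do match the paper: the treatment of $K^+/k^+$ (Chevalley's formula forces the unit index to be $1$ and the ambiguous part to be trivial, then Nakayama), the computation $\#\Cl_k=3$ from the three quadratic subfields, and the argument for (ii) are essentially the paper's proof. The genuine gap is the middle step, $\#\Cl_K^G=3$: you reduce it to $u_2:=(E_k:E_k\cap\No_{K/k}K^\times)=1$ but leave that open, calling it ``the main obstacle.'' In fact it has a purely formal solution requiring no local analysis and no information about $\alpha$: a unit of $k$ is a local norm at every unramified place of the cyclic extension $K/k$; the product of its Hasse norm residue symbols over all places is trivial; and since $\mathfrak{p}\mid 3$ is the \emph{only} ramified place, the symbol there must be trivial as well. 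By the Hasse norm theorem every unit of $k$ is then a global norm, so $u_2=1$ and Chevalley's formula gives $\#\Cl_K^G=3$ outright. This product-formula argument is exactly what the paper uses (and is the same mechanism you could have invoked for $K^+/k^+$).

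More seriously, the route you sketch toward closing this gap --- ``pinning down $\Cl_K^G$ directly'' as the group generated by $[\mathfrak{P}_K]$ once the class of $\mathfrak{a}$ has capitulated --- rests on a false description of the ambiguous class group. The classes of ambiguous \emph{ideals} (extended ideals and ramified primes) generate $\Cl_K^G$ only up to a quotient controlled by the cohomology of the units, and in the present situation that ``exotic'' part is everything: the unique prime $\mathfrak{P}\mid 3$ of $K$ is principal (its square $\mathfrak{P}^{1+s}$ is the extension of an ideal of $K^+$ that is $3$-principal because $\Cl_{K^+}=1$, and $2$ is invertible on the $3$-class group), as the paper notes in the remarks following its main theorem. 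Combined with the capitulation of $\Cl_k$, your criterion would therefore force $\#\Cl_K^G=1$, i.e.\ $u_2=3$, contradicting the correct value $\#\Cl_K^G=3$ (confirmed numerically by $\Cl_K\simeq\Z/9\Z\times\Z/3\Z$ for $d=211$). In particular the equivalence you assert between ``$[\mathfrak{P}_K]\neq 1$'' and ``every unit of $k$ is a local norm at $\mathfrak{p}$'' fails here: the second statement holds and the first does not.
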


\begin{proof} (i) Using the Chevalley's formula in $K^+/k^+$ (see e.g. \cite{Gr1}, Lemma II.6.1.2) 
with a trivial $3$-class group for $k^+$, the formula reduces to 
$$\# \Cl_{K^+}^G = \Frac{3}{3\,.\, (E_{k^+} : E_{k^+} \cap\No_{K^+/k^+}K^+{^\times} )} = 1, $$
since the product of ramification indices is equal to 3 ($\Cl_{H_{k} }=1$ implies that $K^+/k^+$ is necessarily totally ramified
at the single prime above 3 of $k^+$).

\smallskip
The same formula in $K/k$ is $\# \Cl_{K}^G = \Frac{\#\Cl_{k} \times 3}{3\,.\,  (E_{k} : E_{k} \cap\No_{K/k}K{^\times} )}$ with 
$E_k = \langle \varepsilon, j \rangle$, where $\varepsilon$ is the fundamental unit of $k^+$;
but, as for $K^+/k^+$, there is by assumption a single place of $k$ ramified in $K/k$, thus, using 
the product formula, the Hasse norm theorem shows that all these units are local norms everywhere hence global norms.
So $\# \Cl_{K}^G =3$ since $\#\Cl_{k} = \#\Cl_{k^-}=3$.

\smallskip
(ii) We have $\# \Cl_{H_k}^{G'} = \Frac{\#\Cl_{k} }{3\,.\,  (E_{k} : E_{k} \cap\No_{H_k/k}H_k^{\times} )}=1$
where $G' := {\rm Gal}(H_k/k)$.
\end{proof}

\begin{corollary}\label{coroB}
 We have $\Cl_{K}= \Cl_{K}^-$ since $\Cl_{K}^+ \simeq \Cl_{K^+} =1$.
\end{corollary}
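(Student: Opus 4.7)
The plan is to assemble two standard ingredients. First, since $p=3$ is odd and $s$ is an involution, the elements $\tfrac{1}{2}(1\pm s)\in \Z_3[\langle s\rangle]$ are well-defined orthogonal idempotents, so the $3$-class group splits as $\Cl_K = \Cl_K^+\oplus \Cl_K^-$, where $\Cl_K^\pm$ are the $\pm 1$-eigenspaces of $s$. Second, I would establish the classical identification $\Cl_K^+\simeq\Cl_{K^+}$ via the norm $\No_{K/K^+}$, which combined with Lemma \ref{lemmA}(i) will close the argument.

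For that identification, the natural map $j_{K/K^+}\colon \Cl_{K^+}\to \Cl_K$ lands in $\Cl_K^+$ because ideals extended from $K^+$ are fixed by $s$. The standard relations $\No_{K/K^+}\circ j_{K/K^+}=[K:K^+]=2$ on $\Cl_{K^+}$ and $j_{K/K^+}\circ \No_{K/K^+}=1+s$ on $\Cl_K$ suffice: on $\Cl_K^+$ the operator $1+s$ is multiplication by~$2$, so both compositions are multiplication by $2$. Since $2\in\Z_3^\times$, these are isomorphisms and $\No_{K/K^+}\colon \Cl_K^+\to \Cl_{K^+}$ is an isomorphism with inverse $\tfrac12\,j_{K/K^+}$.

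With this in place the corollary is immediate: Lemma \ref{lemmA}(i) gives $\Cl_{K^+}=1$, hence $\Cl_K^+\simeq \Cl_{K^+}=1$, and the eigenspace decomposition forces $\Cl_K=\Cl_K^-$. The only point deserving a touch of care is that the splitting and the norm isomorphism must be asserted at the level of the $3$-part itself, not merely after tensoring with $\Q_3$; but this is exactly what $2\in \Z_3^\times$ provides, so there is no real obstacle. The whole statement is essentially a bookkeeping reformulation of the well-known fact that, for $p$ odd, the $p$-part of the plus class group of a CM-field coincides canonically with the $p$-class group of its maximal totally real subfield.
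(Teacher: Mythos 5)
Your proof is correct and follows exactly the route the paper intends: the corollary's entire justification is the standard isomorphism $\Cl_{K}^+ \simeq \Cl_{K^+}$ for the odd prime $3$ together with Lemma \ref{lemmA}(i), and you have simply written out that standard argument (idempotents $\frac{1}{2}(1\pm s)$ and the relations $\No_{K/K^+}\circ j_{K/K^+}=2$, $j_{K/K^+}\circ \No_{K/K^+}=1+s$) in full. No discrepancy with the paper.
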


\subsection{Numerical data} We have a first example with $d=211 \equiv 1 \pmod 3$ and $\alpha = \frac{17 +  \sqrt {-211}}{2}$
where $(\alpha) = {\mathfrak p}_5^3$ for a non-principal prime ideal dividing $5$ in $k^- $. 

\smallskip
The class number of $k^+$ is $1$ and that of $k^-$ is $3$, which is coherent with the fact that the fundamental unit 
$\varepsilon = 440772247+17519124\,\sqrt {3 . 211}$ of $k^+$ is $3$-primary (indeed, $\varepsilon
\equiv 1 +3 .\,\sqrt {3 . 211} \pmod 9$), which implies that $H_{k^-}$ is given via $k(\sqrt[3] {\varepsilon})/k$
which is decomposed by means of an unramified cubic cyclic extension of $k^-$.

\smallskip
So all the five conditions (i) to (v) are fulfilled.

\smallskip
The PARI program (see Section \ref{data}) gives in ``$component(H,5)$'' the class number and the structure of the whole 
class group of $K$; the program needs an irreducible polynomial defining $K$;  it is given by ``$P=polcompositum(x^2+x+1, Q)$'' where $Q=x^6-17\,x^3+5^3$ is the irreducible polynomial 
of $\sqrt[3] \alpha$ over $\Q$ (the general formula is $Q=x^6-{\rm Tr}_{k^-/\Q}(\alpha)\,x^3+\No_{k^-/\Q} (\alpha)$);
one obtains:

\smallskip\noindent
$P=x^{12} - 6\,x^{11} + 21\,x^{10} - 84\,x^9 + 243\,x^8 - 432\,x^7 + 1037\,x^6 - 1896\,x^5 - 204\,x^4 - 966\,x^3 + 5949\,x^2 + 4905\,x + 11881$.

\subsection{Conclusion} The program gives $\Cl_{K} \simeq \Z/9\,\Z \times \Z/3\,\Z$ for a class number 
equal to $27$. This yields the 3-rank $R^- = 2$ of $\Cl_{K}^-$ when the 3-rank $r^-$ of 
$\Cl_{k}^-$ is equal to~1, which is incompatible with the formula
$$R^--1 = 3\times (r^--1). $$

But, this ``Riemann-Hurwitz formula'' is valid if and only if $\Cl_{K}= \Cl_{K}^G \simeq \Z/3\,\Z$
(no exceptional 3-classes). Such a case is also very frequent (see \S\,\ref{inv}).

\section{Some structural results}\label{structure}
Denote by $M$ a finite $\Z_p[\Gamma]$-module, where we assume that $\Gamma$ is an Abelian Galois group
of the form  $G \times g$, where $G= {\rm Gal} (K/k) =: \langle \sigma \rangle$ is cyclic of order $p$ and
$g \simeq {\rm Gal} (k/k_0)$  (of order prime to $p$), where $k_0$ is a suitable subfield of $k$ (so that $K=k\,K_0$
with $K_0:= K^g$). The existence of $g$ allows us to take isotypic components of $M$ (as the $\pm$-components 
when the fields are of CM-type).  In our example, $g=\langle s \rangle$, $k_0=k^+$ and $K_0=K^+$.

\smallskip
We have $M/M^p = M/M^{1+\sigma+\ldots +\sigma^{p-1}-\Omega}$
where $\Omega = u\, (\sigma -1)^{p-1}$ for an inversible element $u$ of the group algebra $\Z_p[G]$ 
(\cite{Gr3}, Proposition 4.1); in our case $p=3$, $\Omega =\sigma^2 (\sigma-1)^2$. 
We shall use:
$$\omega :=\sigma \, (\sigma-1), \ \ \hbox{such that $\omega^2 \equiv 3\!\! \pmod {\nu}$, \  where 
$\nu := 1+ \sigma + \sigma^2$} .$$

\smallskip
For our purpose we shall have $M=\Cl_{K}^-$ (we refere to \cite{Gr3}, Chap. IV, A, \S\,2).

\smallskip
By class field theory, when $K/k$ is totally ramified at  the unique ${\mathfrak p} \mid 3$, 
the arithmetical norm $\No_{K/k} : \Cl_{K}^- \too \Cl_{k}^-$ is surjective.

\smallskip
Another important fact  for the structure of $\Cl_K$, in our particular context, is that the class of order 3 of $k$ capitulates in $K$
because the equality $(\alpha) = {\mathfrak a}^3$ becomes $(\sqrt[3] \alpha) = ({\mathfrak a})_K$ in $K$ 
(the transfert map $j_{K/k} : \Cl_k^- \too \Cl_K^-$ is not injective).
This has the following tricky consequence:
$$\No_{K/k} (\Cl_K^-) = \Cl_k^- \ \ \&  \ \  (\Cl_K^-)^\nu = 1. $$

Return to the general case $M= \Cl_K$ for any prime $p$ and suppose $\# M^G = p$.
Put $M_i := \{h \in M,\   h^{(\sigma-1)^i}=1\}$, $i \geq 0$, and let $n$
 be the least integer $i$ such that $M_i = M$.

 From the exact sequence
$1 \to M_1=M^G \too M_{i+1} \displaystyle \mathop{\tooo}^{\omega}  M_{i+1}^\omega \subseteq M_i \to 1$,
with $\# M^G=p$, we obtain that $M_{i+1}^\omega = M_i$ and $\# (M_{i+1}/M_i)=p$ for 
$i=0, \ldots, n-1$. 
From \cite{Gr3}, Proposition 4.1 and Corollaire 4.3, assuming  $M^\nu=1$ and $\#(M_{i+1}/M_i) = p$ for all $i < n$, 
we obtain the following structure of $\Z$-module:
$$M \simeq (\Z/p^{a+1}\Z)^b \times (\Z/p^a\Z)^{p-1-b}, $$
where $n=a\,(p-1) + b$, $0 \leq b \leq p-2$. This implies (assuming $M^+=1$) that the $p$-rank $R^-$
of $M^-$ is $R^-= p-1$ if $a\geq 1$  and $R^-= b$ if $a=0$ (i.e., $b=n \leq p-2$). 

\smallskip
So, in the case 
$r^- =1$, we have the Riemann-Hurwitz formula $R^- -1=p\,(r^- -1)$ if and only if $b=n=1$
which is equivalent to $M^- = M^-{}^G$. Otherwise, $R^-$ can take any value in $[1, p-1]$, even if $r^-=1$.

\smallskip
This general isomorphism comes from the formula (\cite{Gr2}, Corollaire 2.8):
$$\# \big(M_{i+1}/M_i \big)^G = \frac{\# \Cl_{k} \times \prd e_v }{ [K : k] \, .\,\#\No_{K/k}(M_i) 
\, .\, (\Lambda_i : \Lambda_i  \cap \No_{K/k}(K^\times) )} , $$
where $\No_{K/k}(M_i) := \cl_k(\No_{K/k}(I_i) )$ for a suitable ideal group $I_i$ such that
$\cl_K(I_i)=M_i$, and where $\Lambda_i = \{x \in k^\times, \  (x) \in \No_{K/k}(I_i) \}$.

\smallskip
In our case there is a single ramified place ${\mathfrak p} \, \vert \, 3$ and the elements of $\Lambda_i$, being norms of ideals, 
are everywhere local norms except perhaps at ${\mathfrak p}$; so $(\Lambda_i : \Lambda_i  \cap \No_{K/k}(K^\times) )=1$
under the product formula of class field theory and the Hasse norm theorem, and 
$\# \big(M_{i+1}/M_i \big)^G = \Frac{3 }{ \#\No_{K/k}(M_i)}$.

\smallskip
In our numerical example with $d=211$, we get necessarily $a=b=1$, $n=3$, giving the structure $\Z/9\Z \times \Z/3\Z$.

\medskip
For more structural results when $M^\nu\ne 1$, see \cite{Gr3}, Chap. IV, \S\,2, Proposition 4.3 
valid for any $p \geq 2$. In our biquadratic case and $p=3$, we obtain interesting structures for which 
a theoretical study should be improved.
From the general PARI program we have obtained the following numerical examples:

\smallskip
(i) For $d=1759$, for which $\#\Cl_{k^+}=1$, $\Cl_{k^-} \simeq \Z/27\Z$, and $\alpha = 37+ 20\,\sqrt{-d}$
of norm $89^3$,
the structure is $\Cl_{K}^- \simeq \Z/27\Z$ (i.e., $\Cl_{K}^-=(\Cl_{K}^-)^G$).

\smallskip
(ii) For $d=2047$,  for which $\#\Cl_{k^+}=1$, $\Cl_{k^-} \simeq \Z/9\Z$, and $\alpha= 332+ 11\,\sqrt{-d}$
of norm $71^3$, the structure  is
$\Cl_{K}^- \simeq \Z/9\Z \times \Z/3\Z \times \Z/3\Z$.

\smallskip
(iii) For $d=1579$, for which $\#\Cl_{k^+}=1$, $\Cl_{k^-} \simeq \Z/9\Z$, and $\alpha = \frac{1}{2} (115+ 3\,\sqrt{-d})$ of norm $19^3$,
the structure is $\Cl_{K}^- \simeq \Z/27\Z \times \Z/9\Z \times \Z/3\Z$.

\section{The structure $M \simeq \Z/3^a\Z \times \Z/3^a\Z$ does not exist}

Of course, we keep the same numerical assumptions about the $3$-class groups of $k^+$ and $k^-$ (especially 
$\Cl_{k^-} \simeq \Z/3\Z$), the non-splitting of $3$ in $k/\Q$, and the Kummer construction of 
the $3$-ramified cubic cyclic extension $K/k$.

\begin{theorem}\label{thmf} Under all the Hypothesis \ref{hypo}, we get
$$\Cl_K = \Cl_K^- \simeq \Z/3^{a+1}\Z \times \Z/3^a\Z, $$ 
for some $a \geq 0$. The case $a=0$ is equivalent to $\Cl_K=\Cl_K^G$.
\end{theorem}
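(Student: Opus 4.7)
The plan is to identify $M := \Cl_K^-$ with a cyclic module over the discrete valuation ring $R := \Z_3[\zeta_3]$ (with uniformizer $\pi := \sigma-1$ satisfying $\pi^2 = -3\zeta_3$) and then to show its length is necessarily odd.

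First, I would assemble the facts already available in the excerpt. By Corollary~\ref{coroB}, $\Cl_K = \Cl_K^- = M$. The capitulation $(\sqrt[3]{\alpha})_K = (\mathfrak a)_K$ together with the surjectivity of $\No_{K/k}$ yields $M^\nu = j_{K/k}(\No_{K/k}(M)) = j_{K/k}(\Cl_k^-) = 1$, so $M$ is naturally a module over $\Z_3[G]/(\nu) \simeq R$. Lemma~\ref{lemmA}(i) gives $\#M^G = 3$; since $M^G = M[\pi]$ and $\#M[\pi] = \#(M/\pi M) = 3^{d(M)}$ for any finite $R$-module (with $d(M)$ the minimal number of $R$-generators), Nakayama forces $d(M) = 1$, whence $M \simeq R/(\pi^n)$ for some $n \geq 1$. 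A direct computation in the $\Z_3$-basis $\{1,\pi\}$ then yields
\[
R/(\pi^n) \;\simeq\; \Z/3^{\lceil n/2 \rceil}\Z \times \Z/3^{\lfloor n/2 \rfloor}\Z,
\]
which is the announced $\Z/3^{a+1}\Z \times \Z/3^a\Z$ when $n = 2a+1$, and the ``forbidden'' balanced $(\Z/3^a\Z)^2$ when $n = 2a$.

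The only remaining step is to rule out even $n \geq 2$. My plan is to exploit the action of ${\rm Gal}(K/\Q) = S_3 \times \langle s \rangle$: a reflection $\tau \in S_3 = {\rm Gal}(K^+/\Q)$ satisfies $\tau \sigma \tau^{-1} = \sigma^{-1}$ and commutes with $s$, hence acts on $M$ as an $R$-antilinear involution $\tau(rm) = \bar r\,\tau(m)$. For $n \geq 2$ one has $\tau \neq \pm\mathrm{id}$ (else evaluating at $r = \zeta_3$ would give $(\zeta_3-\zeta_3^{-1})\,m = 0$ for all $m$, i.e.\ $\pi\cdot M = 0$, forcing $n \leq 1$); hence $\tau$ has both eigenvalues $\pm 1$. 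In the even case $M \simeq (\Z/3^a\Z)^2$ is free of rank $2$ over $\Z/3^a\Z$, so both eigenspaces $M^{\pm}$ are necessarily free of rank $1$, forcing the balanced identity $\#M^\tau = \#M^{-\tau} = 3^a = \sqrt{\#M}$. In the odd case, by contrast, the eigenspace orders $\{3^{a+1},3^a\}$ are unequal, and neither equals $\sqrt{\#M}=3^{a+1/2}$. I would then compute $\#M^\tau$ arithmetically via Chevalley's ambiguous-class formula for the quadratic real extension $K/K^\tau$ (where $K^\tau = K^{+,\tau}(\zeta_3)$ is the degree-$6$ fixed subfield), using $\Cl_K^+ = \Cl_{K^+} = 1$ to kill the plus contribution and the Hasse norm theorem at the unique prime of $K^\tau$ above~$3$ to control the unit/norm index. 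The resulting value fails to equal $\sqrt{\#M}$, forcing $n$ odd and $M \simeq \Z/3^{a+1}\Z \times \Z/3^a\Z$; the case $a=0$ corresponds to $n=1$, i.e.\ $M = M^G$.

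The main obstacle is the final arithmetic comparison: Chevalley for $K/K^\tau$ involves the $3$-class group of the non-Galois real cubic $K^{+,\tau}$, which is not directly controlled by Hypothesis~\ref{hypo}, so one must show that for \emph{every} such subfield the resulting $\#M^\tau$ differs from $\sqrt{\#\Cl_K}$. Should this route prove too delicate, my fallback is to use Kummer duality for the CM extension $K/K^+$: embed $\text{Hom}(M,\mu_3)$ into $K^{+,\times}/K^{+,\times 3}$ via the reflection principle, and extract the parity of $n$ from the $\F_3$-dimensions of the image on the $\tau$-eigenspaces, where the triviality of $\Cl_{K^+}$ obstructs the balanced structure $(\Z/3^a\Z)^2$.
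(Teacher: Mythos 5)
Your first step is sound and in fact cleaner than the paper's: once you know $M^\nu=1$ (from the capitulation of $\cl_k({\mathfrak a})$) and $\#M^G=3$ (Lemma \ref{lemmA}), viewing $M$ as a finite module over the discrete valuation ring $\Z_3[G]/(\nu)\simeq\Z_3[\zeta_3]$ and applying Nakayama gives $M\simeq R/(\pi^n)$, hence the dichotomy $\Z/3^{a+1}\Z\times\Z/3^a\Z$ ($n$ odd) versus $(\Z/3^a\Z)^2$ ($n$ even); the paper reaches the same dichotomy through the filtration $M_i$ and the decomposition $M_m=\langle h_m\rangle\oplus\langle h_{m-1}\rangle$. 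But the entire content of the theorem is the exclusion of the balanced case, and there your proposal has a genuine gap that you yourself flag. Your reflection $\tau$ does act $R$-antilinearly and does force $\#M^\tau=3^{n/2}$ in the even case, but the quantity you would compare it to is not computable from Hypothesis \ref{hypo}: since $[K:K^\tau]=2$ is prime to $3$, the extension map identifies $M^\tau$ with the $3$-class group of the degree-$6$ field $K^\tau=K^{+,\tau}(j)$, so ``computing $\#M^\tau$ by Chevalley'' is circular --- you would need the $3$-class number of a non-normal cubic subfield that the hypotheses say nothing about. The Spiegelungssatz fallback cannot rescue this either: reflection theorems control $3$-\emph{ranks}, and both candidate structures have rank $2$ for $a\ge 1$, so no rank inequality distinguishes $(\Z/3^a\Z)^2$ from $\Z/3^{a+1}\Z\times\Z/3^a\Z$.

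The paper's mechanism is different and is the missing idea: assume $M\simeq(\Z/3^e\Z)^2$ with $e\ge 1$ and let $F\subset H_K$ be the fixed field of $M^3=M_{2(e-1)}$, so that ${\rm Gal}(F/K)\simeq(\Z/3\Z)^2$ and $F\supset KH_k$. Using the explicit $\sigma$-action on the generators $\ov h_{2e},\ov h_{2e-1}$ of $M/M^3$ (namely $\ov h_{2e-1}^{\,\sigma}=\ov h_{2e-1}$ because $h_{2e-1}^{\sigma-1}\in M^3$, precisely a feature of the \emph{even} case), one shows $F/k$, hence $F/H_k$, is Galois; a group of order $9$ is abelian, so $F/H_k$ splits off a cyclic cubic extension $L/H_k$ unramified everywhere, contradicting $\#\Cl_{H_k}=1$ (Lemma \ref{lemmA}(ii)). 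Note that this is where the hypothesis $\#\Cl_{k^-}=3$ (and not merely $r^-=1$) enters, via the triviality of $\Cl_{H_k}$; any correct proof must use it, and your outline never does. To complete your write-up you would need either to carry out this tower argument or to find an independent arithmetic determination of $\#M^\tau$, which, as explained above, your Chevalley route does not provide.
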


\begin{proof} Let $n\geq 1$ be the least integer such that $M_n = M := \Cl_K$. From the relations
$M_{i+1}/M_i \simeq \Z/3\Z$, for $0\leq i \leq n-1$, we get $\# M= 3^n$.

\smallskip
Let $h_n \in M$ be such that $\No_{K/k}(h_n)$ generates $\Cl_k$ (equivalent to $h_n \in M_n \setminus M_{n-1}$).
Since $M_{i+1}^\omega = M_i$ for $0\leq i \leq n-1$, $M_n$ is the $\Z_p[G]$-module generated by $h_n$ 
and for all $i$, $0\leq i \leq n-1$, $h_{n-i} := h_n^{\omega^i}$ generates $M_{n-i}$.
We have $h^{\omega^2} = h^3$ for all $h\in M$.

\smallskip
Let $m$, $1 \leq m \leq n$; the structure of $M_m$ is given (as for $M$, see Section \ref{structure}) by:
$$M_m \simeq (\Z/3^{a_m+1}\Z)^{b_m} \times (\Z/3^{a_m}\Z)^{2-b_m} , \ \, m=2\,a_m + b_m,\, b_m \in \{0, 1\} . $$

(i) Case $m=2\,e$. Then $M_m \simeq \Z/3^e\Z \times\Z/3^e\Z$.

\smallskip
(ii) Case $m=2\,e+1$. Then $M_m \simeq\Z/3^{e+1}\Z \times\Z/3^e\Z$.

\smallskip
With the previous notations we have, for the two cases:
$$M_m = \langle h_m \rangle \oplus \langle h_{m-1} \rangle,\ \ \hbox{as $\Z$-module}. $$
Indeed, suppose that $h=h_m^A = h_{m-1}^B$, $A, B \in \Z$. Then $h_m^{A-B \,\omega} =1$. Since $h_m$
is anihilated by $\omega^m$ and not by $\omega^{m-1}$, we get $A-B\,\omega \in (\omega^m)$.

\smallskip
In the case $m=2e$, $\omega^m \equiv 3^e \pmod \nu$ giving in the algebra $\Z_3[G]=\Z_3[\omega]$, the relations
$A \equiv B \equiv 0 \pmod {3^e}$, hence $h=1$. Since $\# M_{2e} = 3^{2e}$, the elements $h_m$ and 
$h_{m-1}$ are independent of order $3^e$.

\smallskip
In the case $m=2e+1$, $\omega^m \equiv 3^e\omega \pmod \nu$ and $A-B \,\omega \equiv 0 \pmod {3^e\omega}$,
giving $A = 3^e\,A'$ and $B= 3^e\,B'$, then $A' - B' \omega \equiv 0 \pmod {\omega\, \Z_3[\omega]}$, which implies
$A' \equiv 0 \pmod 3$ hence the result in this case with $h_m$ of order $3^{e+1}$ and $h_{m-1}$ of order $3^e$.

\medskip
Suppose that the structure of $M$ is $M_{2e} \simeq \Z/3^e\Z \times\Z/3^e\Z$, $e \geq 1$, and let $F$ be the subfield of  $H_K$ 
fixed by $M_{2(e-1)} = M^3$; so $F$ is a cubic cyclic extension of $K H_k$ and ${\rm Gal}(F/K) \simeq \Z/3\Z \times \Z/3\Z$.

\smallskip
The $3$-extension $F/k$ is Galois: indeed, if $\sigma$ is a generator of ${\rm Gal}(K/k)$, the action of $\sigma$
on  ${\rm Gal}(F/K) = {\rm Gal}(H_K/K)/ M^3$ is given, via the correspondence of class field theory, by the action of 
$\sigma$ on $\ov h_{2e} := h_{2e} M^3$ and on $\ov h_{2e-1} := h_{2e-1} M^3$. 
From $\omega = \sigma \,(\sigma-1) = \sigma^{-2} \,(\sigma-1)$, we have
$\ov h_{2e}^\sigma = \ov h_{2e} \times \ov h_{2e-1}^{\sigma^2}$, 
and $\sigma$ acts on $\ov h_{2e-1}$ by $\ov h_{2e-1}^\sigma =  h_{2e-1}^\sigma M^3$; 
but $h_{2e-1}^{\sigma(\sigma - 1)} = h_{2(e-1)} \in M^3$, so $h_{2e-1}^{\sigma - 1} \in M^3$ and 
$\ov h_{2e-1}^\sigma = \ov  h_{2e-1}$, hence the result and the fact that $F/H_k$ is Galois. But a group of order 
$p^2$ ($p$ prime) is Abelian and $F/H_k$ is the direct compositum of $KH_k$ and $L$ such that $L$ is the 
decomposition field at $3$ giving a cyclic extension $L/H_k$, unramified  of degree 3. 
But we know (Lemma \ref{lemmA}) that the $3$-class group of $H_k$ is trivial (contradiction).
\end{proof}

\begin{remarks} (i) Since $3$ is non-split in $k/\Q$, the unique prime ideal ${\mathfrak P} \, \vert \, 3$ in $K$ is principal:
indeed, ${\mathfrak P}^{1+s} = {\mathfrak P}^2$ gives the square of the extension of ${\mathfrak P}^+\, \vert \,  3$ 
in $K^+$ which is $3$-principal (Lemma \ref{lemmA}); so $\cl_K({\mathfrak P}) = 1$. By class field theory, ${\mathfrak P}$
splits completely in $H_K/K$.

\smallskip
(ii) In the case $n$ even for the above reasoning, an analog of the field $F$ does not exist as Galois field over $H_k$.

\smallskip
(iii) Note that the parameter $a$ can probably take any value; we have for instance obtained the following example:

\smallskip
 For $d=12058$, for which $\#\Cl_{k^+}=2$, $\Cl_{k^-} \simeq \Z/3\Z$, and $\alpha = 989+ 26\,\sqrt{-d}$ of norm $209^3$,
the structure is $\Cl_{K}^- \simeq \Z/3^4\Z \times \Z/3^3\Z$. A polynomial defining $K$ is:

\smallskip\noindent
$P = x^{12} - 6\, x^{11} + 21\, x^{10} - 4006\, x^9 + 17892\, x^8 - 35730\, x^7 + 22212821\, x^6 - 66531354\, x^5 - 113482743\, x^4 - 35777798264\, x^3 + 54059937672\, x^2 + 54106942656\, x + 83308554531904$.

\smallskip
 For $d=86942$, for which $\#\Cl_{k^+}=4$, $\Cl_{k^-} \simeq \Z/3\Z$, and $\alpha = 557+ 3\,\sqrt{-d}$ of norm $103^3$,
the structure is $\Cl_{K}^- \simeq \Z/3^5\Z \times \Z/3^4\Z$. A polynomial defining $K$ is:

\smallskip\noindent
$P =  x^{12} - 6\, x^{11} + 21\, x^{10} - 2278\, x^9 + 10116\, x^8 - 20178\, x^7 + 3449985\, x^6 - 10289502\, x^5 - 8954865\, x^4 - 2399550304\, x^3 + 3642928674\, x^2 + 3641624304\, x + 1191621124996$.
\end{remarks}

In conclusion, the structure of $\Cl_K^-$ strongly depends on the hypothesis on the order of $\Cl_k^-$
and not only of its $3$-rank.

\section{Numerical results}\label{data}
We give explicit numerical computations of $\Cl_K$, for various biquadratics fields $k$ satisfying the conditions (i) to (v)
assumed in the Hypothesis \ref{hypo}.

\smallskip
The following PARI program gives in ``$component(H,5)$'' the class number and the structure of the whole class 
group $\Cl_K$ of $K$ in the form 
$$ [class number, [c_1,  \ldots, c_\lambda] ]$$ 
such that the class group is isomorphic to $\plus_{i=1}^\lambda \Z/ c_i \Z$.

\smallskip
For simplicity we compute an $\alpha$ being an integer without non-trivial rational divisor. So $(\alpha)$ is the 
cube of an ideal if and only if $\No_{k^-/\Q} (\alpha)\in \Q^{\times 3}$. Then the irreducible polynomial 
defining $K$ is given by $P=polcompositum(x^2+x+1, Q)$ 
where $Q=x^6-2\,a\,x^3+(a^2+d\,b^2)$ or $Q=x^6-\,a\,x^3+(a^2+d\,b^2)/4$, where 
$\alpha = a+b\,\sqrt{-d\,}$ or $\Frac{a+b\,\sqrt{-d}}{2}$, respectively.

\smallskip
In all the examples, we recall that the 3-class group of $k^+$ is trivial, and that of $k^-$ is of order $3$ exactely.
Thus, the 3-class group of $K^+$ is trivial and $\Cl_K^-=\Cl_K$; moreover, $\Cl_K^G$ is of order 3.
So the case $n=1$ yields to $a=0, b=1$ ($\Cl_K=\Cl_K^G$), the case $n=3$ yields to $a=1, b=1$, and so on.

\footnotesize
\medskip
\parindent 0pt
allocatemem(1000000000)

$\{$$d=1; while(d<5*10^3, d=d+3; if(core(d)==d, $\par$
D=3*d; if(Mod(D,4)!=1, D=4*D); h=qfbclassno(D); if(Mod(h,3)!=0, $\par$
Dm=-d;  if(Mod(Dm,4)!=1, Dm=4*Dm); hm=qfbclassno(Dm); $\par$
if(Mod(hm,3)==0 \& Mod(hm,9)!=0, $\par$
for(b=1, 10^2, for(a=1, 10^3, if(gcd(a,b)==1, T=2*a; N=a^2+d*b^2; $\par$
if(Mod(-d,4)==1 \& Mod(a*b,2)!=0, T=T/2; N=N/4); $\par$
if(floor(N^{(1/3)})^3-N==0, Q=x^6-T*x^3+N; $\par$
P=polcompositum(x^2+x+1, Q); R=component(P,1); $\par$
H=bnrinit(bnfinit(R, 1),1); F=component(H,5); G=component(F,1); $\par$
if(Mod(G, 3)==0 \& Mod(G, 9)!=0, $\par$
print( "  " );  print(" d= ", d); print("a= ",a); print("b= ",b); $\par$
print("hm = ", hm,"     h = ", h); print(P); $\par$
print("class group : ", F); a=10^3; b=10^2) ))))) ))) $$\}$  

\normalsize
\medskip
The instruction ``if(Mod(G, 3)==0 \& Mod(G, 9)!=0'' must be adapted to the relevant needed structure $(3^{a+1}, 3^a)$.

\medskip
We give below an extract of the numerical examples we have obtained; for a complete table, please see
the Section 5 of:

\smallskip
\url{https://www.researchgate.net/publication/286452614}

\subsection{Case $\Cl_K \simeq \Z/3\Z$ ($a=0$)}\label{inv}
 This implies $\Cl_K = \Cl_K^G\simeq \Z/3\Z$:

\footnotesize
\medskip\medskip
\par$  d= 31
$\par$ a= 1
$,  $ b= 1
$\par$ \# \Cl_{k^-} = 3 $, $   \# \Cl_{k^+} =  1
$\par$ P=x^{12} - 6\,x^{11} + 21\,x^{10} - 52\,x^9 + 99\,x^8 - 144\,x^7 + 179\,x^6 - 186\,x^5 - 33\,x^4 + 268\,x^3 - 87\,x^2 - 24\,x + 64
$\par  $class group : [3, [3]]$\medskip
 
\par$  d= 61
$\par$ a= 8
$,  $\  b= 1
$\par$ \# \Cl_{k^-} = 6  $, $  \# \Cl_{k^+} =  2
$\par$ P=x^{12} - 6\,x^{11} + 21\,x^{10} - 82\,x^9 + 234\,x^8 - 414\,x^7 + 983\,x^6 - 1788\,x^5 - 393\,x^4 - 506\,x^3 + 5394\,x^2 + 4620\,x + 12100
$\par $class group : [12, [6, 2]]$\medskip

{\ \ \ \vdots }

\medskip\smallskip

\par$ d= 913
$\par$ a= 321
$,  $\  b= 4
$\par$ \# \Cl_{k^-}  = 12    $, $   \# \Cl_{k^+} =  8
$\par$ P=x^{12} - 6\,x^{11} + 21\,x^{10} - 1334\,x^9 + 5868\,x^8 - 11682\,x^7 + 661085\,x^6 - 1948290\,x^5 + 702561\,x^4 - 149227072\,x^3 + 227288688\,x^2 + 224655360\,x + 13690872064
$\par $class group : [768, [24, 4, 4, 2]]$\medskip
  
\par$ d= 970
$\par$ a= 563
$,  $\  b= 20
$\par$ \# \Cl_{k^-}  = 12    $, $   \# \Cl_{k^+} =  4
$\par$ P=x^{12} - 6\,x^{11} + 21\,x^{10} - 2302\,x^9 + 10224\,x^8 - 20394\,x^7 + 2701601\,x^6 - 8043702\,x^5 - 2977323\,x^4 - 1568242964\,x^3 + 2378397756\,x^2 + 2373361968\,x + 495396376336
$\par $class group : [600, [30, 10, 2]]$\medskip
  
  \normalsize

\subsection{Case $\Cl_K \simeq  \Z/9\Z \times \Z/3\Z$  ($a=1$)}
${}$

\footnotesize
\medskip

\par$ d= 211
$\par$ a= 17
$,  $\  b= 1
$\par$ \# \Cl_{k^-}  = 3    $, $   \# \Cl_{k^+} =  1
$\par$ P=x^{12} - 6\,x^{11} + 21\,x^{10} - 84\,x^9 + 243\,x^8 - 432\,x^7 + 1037\,x^6 - 1896\,x^5 - 204\,x^4 - 966\,x^3 + 5949\,x^2 + 4905\,x + 11881
$\par $class group : [27, [9, 3]]$\medskip
  
\par$ d= 214
$\par$ a= 89
$,  $\  b= 6
$\par$ \# \Cl_{k^-}  = 6    $, $   \# \Cl_{k^+} =  2
$\par$ P=x^{12} - 6\,x^{11} + 21\,x^{10} - 406\,x^9 + 1692\,x^8 - 3330\,x^7 + 66813\,x^6 - 190530\,x^5 - 45783\,x^4 - 5155600\,x^3 + 8296296\,x^2 + 8156544\,x + 238640704
$\par $class group : [54, [18, 3]]$\medskip
    
{\ \ \ \vdots }

\medskip\smallskip
 
\par$ d= 4531
$\par$ a= 403
$,  $\  b= 5
$\par$ \# \Cl_{k^-}  = 12    $, $   \# \Cl_{k^+} =  2
$\par$ P=x^{12} - 6\,x^{11} + 21\,x^{10} - 856\,x^9 + 3717\,x^8 - 7380\,x^7 + 308855\,x^6 - 904506\,x^5 - 62898\,x^4 - 53921936\,x^3 + 83258895\,x^2 + 82428357\,x + 4694853361
$\par $class group : [1728, [36, 12, 4]]$\medskip
  
\par$ d= 4639
$\par$ a= 361
$,  $\  b= 2
$\par$ \# \Cl_{k^-}  = 51    $, $   \# \Cl_{k^+} =  1
$\par$ P=x^{12} - 6\,x^{11} + 21\,x^{10} - 1494\,x^9 + 6588\,x^8 - 13122\,x^7 + 834341\,x^6 - 2463738\,x^5 + 888141\,x^4 - 212657184\,x^3 + 323349156\,x^2 + 320016960\,x + 21950200336
$\par $class group : [7344, [612, 12]]$\medskip

\normalsize
\subsection{Case $\Cl_K \simeq  \Z/27\Z \times \Z/9\Z$  ($a=2$)}
${}$

\footnotesize
\medskip
  
\par$ d= 1141
$\par$ a= 449
$,  $\  b= 8
$\par$ \# \Cl_{k^-}  = 24    $, $   \# \Cl_{k^+} =  4
$\par$ P=x^{12} - 6\,x^{11} + 21\,x^{10} - 1846\,x^9 + 8172\,x^8 - 16290\,x^7 + 1374653\,x^6 - 4075170\,x^5 + 711057\,x^4 - 487867520\,x^3 + 740542656\,x^2 + 735780864\,x + 74927017984
$\par $class group : [7776, [216, 18, 2]]$\medskip
  
\par$ d= 1174
$\par$ a= 21
$,  $\  b= 5
$\par$ \# \Cl_{k^-}  = 30    $, $   \# \Cl_{k^+} =  2
$\par$ P=x^{12} - 6\,x^{11} + 21\,x^{10} - 134\,x^9 + 468\,x^8 - 882\,x^7 + 62369\,x^6 - 184542\,x^5 - 436569\,x^4 - 1322320\,x^3 + 3316650\,x^2 + 3570000\,x + 885062500
$\par $class group : [2430, [270, 9]]$\medskip
     
{\ \ \ \vdots }

\medskip\smallskip
  
\par$ d= 4087
$\par$ a= 357
$,  $\  b= 8
$\par$ \# \Cl_{k^-}  = 30    $, $   \# \Cl_{k^+} =  2
$\par$ P=x^{12} - 6\,x^{11} + 21\,x^{10} - 1478\,x^9 + 6516\,x^8 - 12978\,x^7 + 1302965\,x^6 - 3870042\,x^5 - 2782815\,x^4 - 543509224\,x^3 + 830485104\,x^2 + 829417344\,x + 150779996416
$\par $class group : [19440, [270, 18, 2, 2]]$\medskip
  
\par$ d= 4567
$\par$ a= 195
$,  $\  b= 1
$\par$ \# \Cl_{k^-}  = 33    $, $   \# \Cl_{k^+} =  7
$\par$ P=x^{12} - 6\,x^{11} + 21\,x^{10} - 440\,x^9 + 1845\,x^8 - 3636\,x^7 + 63557\,x^6 - 179844\,x^5 + 66765\,x^4 - 3988930\,x^3 + 6294021\,x^2 + 6052866\,x + 109286116
$\par $class group : [18711, [2079, 9]]$\medskip
  
\normalsize

\subsection{Case $\Cl_K \simeq  \Z/81\Z \times \Z/27\Z$  ($a=3$)}
${}$

\footnotesize
\medskip

\par$ d= 12058
$\par$ a= 989
$,  $\  b= 26
$\par$ \# \Cl_{k^-}  = 42    $, $   \# \Cl_{k^+} =  2
$\par$ P=x^{12} - 6\,x^{11} + 21\,x^{10} - 4006\,x^9 + 17892\,x^8 - 35730\,x^7 + 22212821\,x^6 - 66531354\,x^5 - 113482743\,x^4 - 35777798264\,x^3 + 54059937672\,x^2 + 54106942656\,x + 83308554531904
$\par $class group : [30618, [1134, 27]]$\medskip
  
\par$ d= 15607
$\par$ a= 534
$,  $\  b= 1
$\par$ \# \Cl_{k^-}  = 39    $, $   \# \Cl_{k^+} =  1
$\par$ P=x^{12} - 6\,x^{11} + 21\,x^{10} - 2186\,x^9 + 9702\,x^8 - 19350\,x^7 + 1764719\,x^6 - 5236188\,x^5 + 2322777\,x^4 - 638361238\,x^3 + 965957748\,x^2 + 958427808\,x + 89817692416
$\par $class group : [28431, [1053, 27]]$\medskip
   
{\ \ \ \vdots }

\medskip\smallskip
 
\par$ d= 45517
$\par$ a= 845
$,  $\  b= 6
$\par$ \# \Cl_{k^-}  = 120    \# \Cl_{k^+} =  4
$\par$ P=x^{12} - 6\,x^{11} + 21\,x^{10} - 3430\,x^9 + 15300\,x^8 - 30546\,x^7 + 7597005\,x^6 - 22699458\,x^5 - 18168075\,x^4 - 7877764840\,x^3 + 11909686236\,x^2 + 11905200672\,x + 5526956498704
$\par $class group : [174960, [3240, 54]]$\medskip
  
\par$ d= 47194
$\par$ a= 293
$,  $\  b= 2
$\par$ \# \Cl_{k^-}  = 120    \# \Cl_{k^+} =  4
$\par$ P=x^{12} - 6\,x^{11} + 21\,x^{10} - 1222\,x^9 + 5364\,x^8 - 10674\,x^7 + 905093\,x^6 - 2683338\,x^5 - 2064183\,x^4 - 313267016\,x^3 + 480721224\,x^2 + 480118080\,x + 75097921600
$\par $class group : [699840, [3240, 54, 2, 2]]$

\normalsize

\subsection*{Acknowledgments} I thank Christian Maire for telling me about difficulties with the techniques developed in \cite{W}, 
Thong Nguyen Quang Do and Jean-Fran\c cois Jaulent for similar conversations and advices about it.

\end{document}